\newtheorem{theorem}{Theorem}[section]
\newtheorem{lemma}[theorem]{Lemma}
\newtheorem{proposition}[theorem]{Proposition}
\theoremstyle{definition}
\newtheorem{definition}[theorem]{Definition}
\theoremstyle{remark}
\newcommand{\TryPackage}[3]{\IfFileExists{#1.sty}{\usepackage{#1}#2}{#3}}
\newcommand{\ZZ}{{\mathbb Z}}
\newcommand{\CC}{{\mathbb C}}
\newcommand{\SLC}{{SL(2, {\mathbb C})}}
\begin{document}

\title
{State invariants of two-bridge knots}



\author{Cynthia L. Curtis}
\address{Department of Mathematics \& Statistics, The College of New Jersey, Ewing, NJ 08628}
\curraddr{}
\email{ccurtis@tcnj.edu}
\thanks{}

\author{Vincent Longo}
\address{Department of Mathematics, University of Nebraksa, Lincoln, NE 68588}
\curraddr{}
\email{vincent.longo@unl.edu}
\thanks{}

\subjclass[2010]{Primary: 57M25, 57M27}
\keywords{2-bridge knot, state surface, state invariant, Alexander polynomial, state polynomial, signature, state signature, determinant}

\dedicatory{}

\date{\today}
\begin{abstract}
In this paper, we consider generalizations of the Alexander polynomial and signature of 2-bridge knots by considering the Gordon-Litherland bilinear forms associated to essential state surfaces of the 2-bridge knots. We show that the resulting invariants are well-defined and explore properties of these invariants. Finally we realize the boundary slopes of the essential surfaces as a difference of signatures of the knot.

\end{abstract}

\maketitle


\section{Introduction} 
In \cite{GL}, Gordon and Litherland studied a bilinear form $GL$ 
associated to a spanning surface of a knot, identifying a correction term enabling them to use this form to compute the signature of the knot. We consider a family of matrices $V$ with $V_{GL} = V+V^T$ the matrix of $GL$ relative to a certain basis.  We use these matrices to build invariants generalizing the Alexander polynomial and the signature of the knot.

In order to obtain invariants, we restrict our attention to 2-bridge knots. Further, the spanning surfaces we consider are the essential spanning surfaces of the knot. These surfaces are of particular interest. Any 2-bridge knot has finitely many such surfaces, and these are well-understood up to isotopy due to the work of Hatcher and Thurston in \cite{HT}. For each such surface we define a {\it state polynomial} given by $det(V-tV^T)$ and a {\it state signature} $\sigma(V+V^T)$ and show that both quantities are independent of the choices involved in their definitions. 
We obtain a finite family of polynomials and a finite set of signatures for each 2-bridge knot.

The boundary slopes of these surfaces have become increasingly important computationally, for example in the computation  of Culler-Gordon-Luecke-Shalen semi-norms (see \cite{CGLS}, \cite{BZ01},\cite{C01} and \cite{O}), $\SLC$-Casson invariants (see \cite{C01},  \cite{BC06}, \cite{BC08}, and \cite{BC12}), and $A$-polynomials (see \cite{CCGLS}, \cite{BZ01} and \cite{BC12}). A motivating observation is that the Hatcher-Thurston formula for computing the boundary slope of an essential surface $S$ with boundary a 2-bridge knot is $2(M-M_0)$, where $M$ is a quantity computed from $S$ and $M_0$ is an analogous quantity computed from a  Seifert surface for the knot. In fact $M_0$ is just the signature of the knot $K$, and our motivating problem was to realize $M$ as a signature of a matrix defined from $S$. We conclude our paper by realizing the boundary slope of an essential spanning surface of a 2-bridge knot as the difference between the state signature associated to $S$ and the signature of the knot.

We note that our proofs of well-definiton of the invariants rely on the simplicity of the essential spanning surfaces of 2-bridge knots. 
An interesting question is whether an analogous construction continues to give polynomial invariants for other knots using their essential spanning surfaces. 

The paper is outlined as follows: in Section 2.1 we recall the definition of the Gordon-Litherland bilinear form and the definition of the Seifert matrix $V$ which we will generalize. In Section 2.2 we restrict our attention to 2-bridge knots, define the appropriate generalization of Seifert matrices, define the invariants, and show that they are well-defined. We investigate properties of the state polynomials in Section 3.1 and of the state signatures in Section 3.2.

\section{Definitions of the Invariants}
\subsection{The Gordon-Litherland form for a spanning surface of a knot} \label{GLsect}

We begin by introducing the bilinear form of Gordon and Litherland associated to a spanning surface following \cite{GL}. Let $K$ be a knot in $S^3$, and let $S$ be a spanning surface for $K$; that is, a surface with $\partial S = K$. Let $F$ be a closed tubular neighborhood of $S$, and view this as the total space of an $I$-bundle over $S$. Let  $\tilde{S}$ be the corresponding $\partial I$-bundle over $S$. 
Note that $\tilde{S}$ is the orientable double cover of $S$ if $S$ is non-orientable and is the trivial double cover if $S$ is orientable. 

Since 
$S$ and $\partial \tilde{S}$ are disjoint, the linking pairing $lk$ defines a bilinear form
$$lk:H_1(S)\times H_1(\tilde{S}) \rightarrow \ZZ.$$ Letting $\tau$ denote the transfer homomorphism $\tau: H_1(S)\rightarrow H_1(\tilde{S})$ and precomposing with $id \times \tau$ we obtain the Gordon Litherland pairing
$$GL: H_1(S)\times H_1(S) \rightarrow \ZZ$$ given by $GL(\alpha,\beta) = lk(\alpha, \tau(\beta))$.

If S is isotoped to a single disk with bands attached, then we may choose a set of curves $x_1,x_2,\ldots,x_k$  on $S$ given by the cores of the bands together with arcs in the disk joining the two ends of the core of each band. Orient each curve $x_i$ arbitrarily. The classes of the curves $x_i$ give a basis for $H_1(S)$. Relative to this basis we obtain a symmetric matrix $V_{GL}$ for the Gordon Litherland pairing. 

Note that if $S$ is orientable, then the matrix $V_{GL}$ is equal to $V+V^T$ for a certain Seifert matrix $V$ for $K$. The matrix $V$ is obtained directly from the linking pairing on $H_1(S)$ with respect to the chosen basis together with a choice of orientation on $S$. Specifically, denote the positive push-off of $x_i$ by $x^*_i$. We define the entries of $V$ by setting $v_{ij} = lk(x_i,x^*_j)$. It is easy to check that $V_{GL} = V+V^T$.

We remark that any Seifert matrix $V$ for a nontrivial knot is not symmetric. The asymmetry in $V$ arises precisely in the off-diagonal entries $v_{ij}\neq v_{ji}$ whenever the oriented intersection number of $x_i$ and  $x_j$ is $\pm 1$. For Seifert surfaces, the intersection paring for the curves $x_i$ remains consistent under band slides, and the Alexander polynomials obtained from the associated Seifert matrices $V$ and $V'$ agree. 

This is not quite the case for non-orientable surfaces. To see this, consider the non-orientable surfaces $S$ and $S'$ with boundary the $5_2$ knot shown in Figure \ref{52surfaces}. These surfaces are isotopic, as $S'$ is obtained from $S$ by sliding the right foot of the band on the left in $S$ counterclockwise along the right band in $S$. However the curves in a basis coming from the cores of the bands in the first picture intersect, whereas the curves in the basis given by the cores of the second band do not. Defining diagonal terms by $\frac{1}{2}lk(x_i,\tau(x_i))$, we obtain Seifert-like matrices $V = 
\left[
\begin{array}{cc}
1  & 0     \\
 1 &-3/2
\end{array}
\right]
$ and $V' = 
\left[
\begin{array}{cc}
1/2  &   1   \\
 1  &   -3/2   
\end{array}
\right]
$. Both $V_{GL}$ and $V'_{GL}$ are matrices for $GL$ relative to the corresponding bases. However $V$ is asymmetric while $V'$ is symmetric, and the polynomials $det(V-tV^T)$ and $det(V'-tV'^T)$ do not agree.

\begin{figure}
\begin{center}
\leavevmode\hbox{}
\includegraphics[scale=0.1]{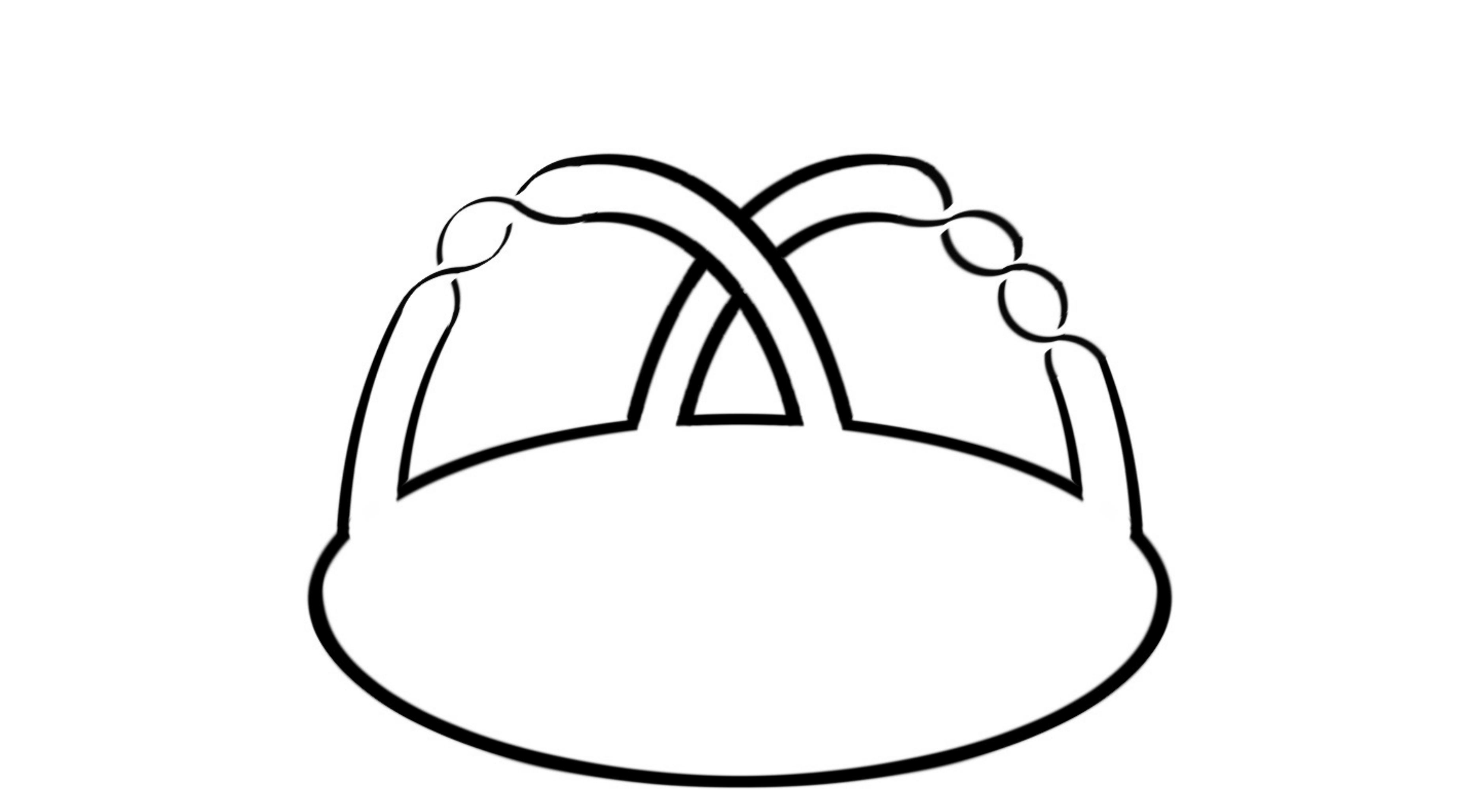}
\includegraphics[scale=0.1]{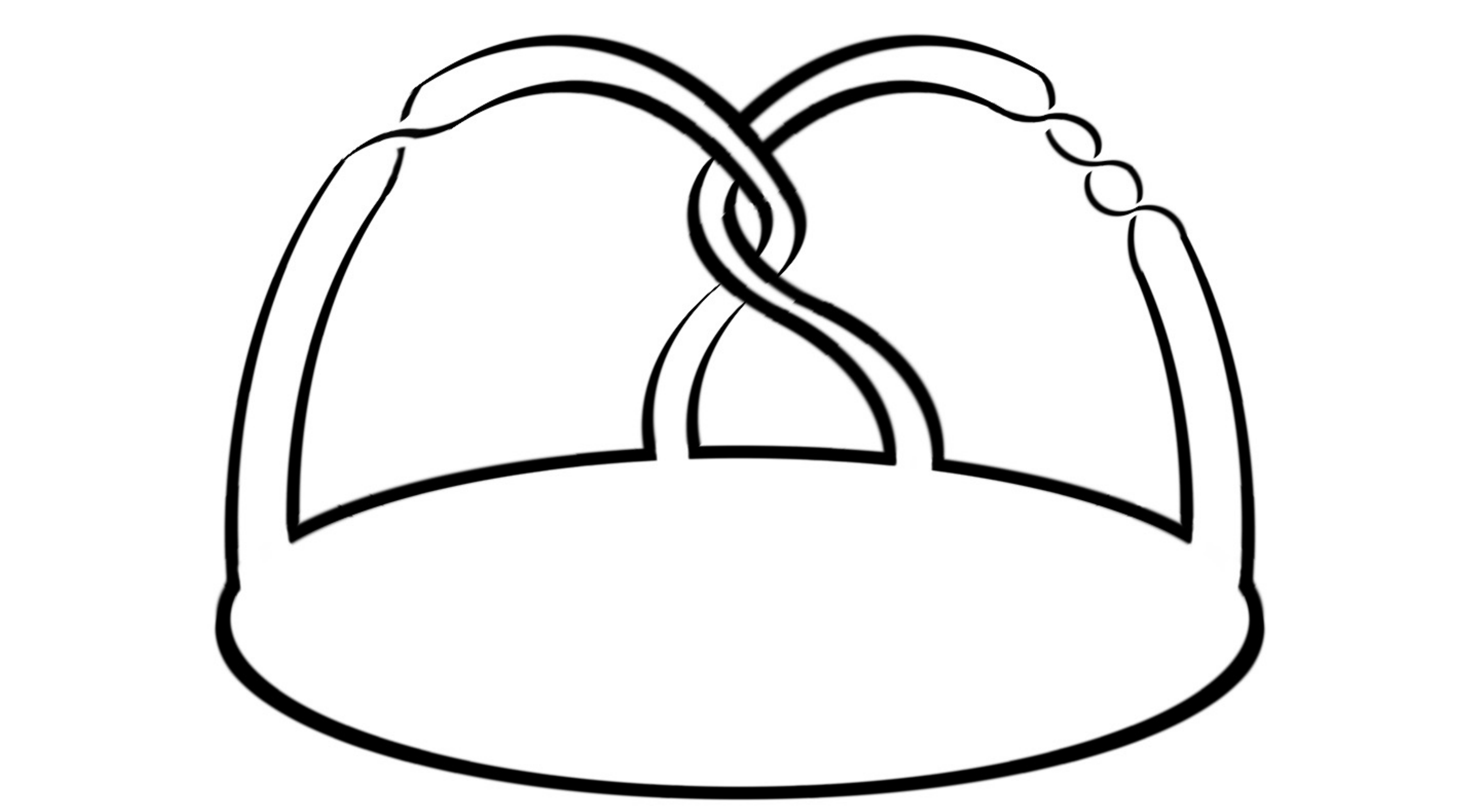}
\caption{The surfaces $S$ and $S'$ corresponding to $[2,3]$} 
\label{52surfaces}
\end{center}
\end{figure}


Thus we must be very careful in generalizing the construction of the Seifert matrix to non-orientable surfaces, so that we obtain a sensible and consistent choice of asymmetry in the resulting matrices. We solve this problem for 2-bridge knots below.




 \subsection{Definitions of the invariants}\label{refs}
 
 For the remainder of our paper, we assume $K$ is a 2-bridge knot, and let $M = S^3 - N(K)$, where $N(K)$ is an open tubular neighborhood of $K$. Let $K = K(\alpha , \beta)$ be the standard 2-bridge notation for $K$, where $\alpha$ is the determinant of $K$.
 
 A surface $\Sigma$ in $M$ is said to be \textit{incompressible} if for any disk $D \subset M$ with $D \cap \Sigma = \partial D$, there exists a disk  $D' \subset \Sigma$, with $\partial D' = \partial D$.  A surface $\Sigma$ is $\partial${\em - incompressible} if for each disk $D \subset M$ with $D \cap \Sigma = \partial_+D$ and $D \cap \partial M  = \partial_-D$ there is a disk $D' \subset \Sigma$ with $\partial_+ D' =\partial_+ D$ and $\partial_- D'  \subset \partial \Sigma$. A surface $\Sigma \subset M$ is \textit{essential} if it is both incompressible and $\partial$-incompressible. 
 
 By \cite{HT} we know that each essential spanning surface $S$ is isotopic to at least one of the following surfaces obtained by plumbing together half-twisted bands: Specifically, we choose half-twisted bands
 corresponding to a continued fraction expansion 
 $$\frac{\beta}{\alpha} = r + \frac{1}{n_1 + \frac{1}{n_2 + \ldots +\frac{1}{n_k}}}$$
with $|n_i|\geq 2$ for each $i$. Henceforth we denote this continued fraction expansion by $[n_1,n_2, \ldots, n_k].$ We arrange these bands vertically as shown in Firgure \ref{HTsurface}, attach parallel untwisted bands as shown, attach the top (level 0) and bottom (level $k$) shaded horizontal rectangular disks shown, and at each level (1,2,\ldots k-1) between the bands we attach either the shaded horizontal rectangular disk shown (the ``inner'' disk) or its complement in the sphere which is the one-point compactification of the horizontal plane (the ``outer'' disk).

\begin{figure}
\begin{center}
\leavevmode\hbox{}
\includegraphics[scale=0.33]{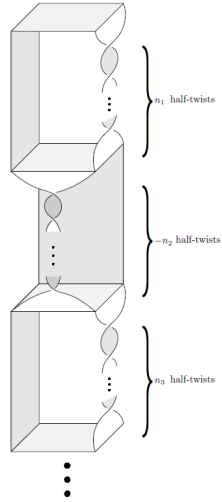}
\caption{The surface corresponding to $[n_1,n_2,...,n_k]$} 
\label{HTsurface}
\end{center}
\end{figure}

For 
such a plumbed surface 
with corresponding continued fraction expansion $[n_1,n_2,...,n_k]$, there is a collection of curves $x_1,x_2,...,x_k$ well-defined up to isotopy of the curve system on the surface, where $x_i$ is essentially the core of the $i^{th}$ ``box'', consisting of a vertical arc forming the core of the $i^{th}$ twisted band, a vertical arc following the opposite untwisted band, and arcs joining the ends of the vertical arcs in the level disks (whether inner or outer) in levels $i-1$ and $i$. 
Note that $x_i \cap x_{i+1}$ can be chosen to be transverse, consisting of a single point in the horizontal disk in
level $i$ (either inner or outer), and $x_i \cap x_j = \emptyset $ if $j \notin \{i-1,i, i+1\}$. Fix such a collection of curves, and orient these arbitrarily. We now generalize the construction of the Seifert matrix as follows:

Fix an isotopy of $S$ to a plumbed surface with corresponding continued fraction expansion $[n_1,n_2,\ldots,n_k]$, and (abusing notation) let $x_1,x_2,...,x_k$ also denote the curve system on $S$ corresponding to the chosen curve system $x_1,x_2,...,x_k$ on the plumbed surface. 
For each $i$ and $j$ such that $j \neq i$ and $x_i\cap x_j \neq \emptyset$ let $D_{ij}$ be a small disk neighborhood of $x_i \cap x_j$ in $S$, chosen so that no two distinct neighborhoods $D_{ij}$ intersect, and let $N_{ij}$ be an arbitrarily chosen normal vector to $D_{ij}$. Denote by $x^{i*}_j$ a curve obtained from $x_j$ by an isotopy supported in a tubular neighborhood of $D_{ij}$ pushing $x_j$ in the direction of $N_{ij}$ if $x_i$ and $x_j$ meet, and let $x^{i*}_j = x_j$ if $x_i$ and $x_j$ are disjoint. 

\begin{definition}
A {\it state matrix} $V_S$ for an essential spanning surface $S$ is a matrix $V_S$ with entries
$$v_{ij} = \left\{  \begin{array}{ll}
							lk(x_i,x^{i*}_j) & \mbox{if } i\neq j \\
							\frac{1}{2} lk(x_i, \tau(x_i)) &  \mbox{if } i=j. \end{array} \right. $$ 

\end{definition}
Note that if $S$ is nonorientable, then the diagonal entries in $V_S$ will be half-integers rather than integers. Also note that there are several matrices $V_S$ for any surface $S$ due to the choices involved in the definition. As in the orientable case, it is clear that $V_{GL} = V_S + V^T_S$ is a matrix for $GL$ for any state matrix $V_S$.

We now use the state matrices associated to these surfaces to define our invariants. Let $S$ be an essential spanning surface for $K$, and let $V_S$ be an associated state matrix. We define generalizations $\Delta_S(t)$ and $\sigma_S$ of the Alexander polynomial and the signature of the knot using $S$. As for the Alexander polynomial, we define two polynomials to be equivalent if they differ by a unit $\pm t^k$ in the ring of integral Laurent polynomials, where $k \in \ZZ$.

\begin{definition}
The {\it state polynomial} $\Delta_{S}(t)$  is the equivalence class of $det(V_S - t V^{T}_{S})$.
\end{definition}
 We will frequently abuse notation and identify $\Delta_S(t)$ by a representative of the equivalence class, as is standard for Alexander polynomials.
 
\begin{definition}
The {\it state signature} $\sigma_{S}$ is the signature of $V_S + V^{T}_{S}$.
\end{definition}

\begin{theorem}\label{invariance}
Let $K$ be a 2-bridge knot, and let $S$ be an essential spanning surface of $K$. The state polynomial $\Delta_{S}$ and the state signature $\sigma_{S}$ depend only on the isotopy classes of $K$ and $S$. Thus the collection of state polynomials and the collection of state signatures are well-defined invariants of the knot $K$.
\end{theorem}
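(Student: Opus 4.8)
The plan is to treat the two invariants separately, exploiting the fact that, since $x_i \cap x_j = \emptyset$ whenever $|i-j| \geq 2$, every state matrix $V_S$ is tridiagonal. Write $d_i = \frac{1}{2} lk(x_i, \tau(x_i))$ for the diagonal entries and $a_i = v_{i,i+1}$, $b_i = v_{i+1,i}$ for the off-diagonal pairs, so that the only off-diagonal data are the pairs $(a_i, b_i)$ coming from the single transverse point $x_i \cap x_{i+1}$.

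For the state signature I would argue that $\sigma_S$ is simply the signature of the Gordon--Litherland form itself. Since $V_S + V_S^T = V_{GL}$ represents $GL$ on $H_1(S)$ relative to the basis $\{[x_i]\}$, any two state matrices --- arising from any choice of isotopy to a plumbed surface, curve system, orientation, or normal vector --- give symmetric matrices that represent the same form $GL$ on the fixed group $H_1(S)$ in possibly different bases, hence are congruent over $\ZZ$. By Sylvester's law of inertia the signature is a congruence invariant, so $\sigma_S$ depends only on the isotopy class of the pair $(K,S)$. This disposes of the state signature completely, including the ambiguity among different plumbed representatives.

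For the state polynomial the symmetric part alone is not enough, so I would control the antisymmetric part for a fixed plumbed surface and curve system. First, reversing the orientation of a single $x_i$ negates row $i$ and column $i$ of $V_S$ while fixing the diagonal, i.e. replaces $V_S$ by $E_i V_S E_i$ with $E_i = \mathrm{diag}(1,\dots,-1,\dots,1)$; since $E_i$ is symmetric, $\det(E_i(V_S - tV_S^T)E_i) = \det(E_i)^2\det(V_S - tV_S^T) = \det(V_S - tV_S^T)$, so orientation choices do not affect $\Delta_S$. For the normal choices, the key observation is that among all state matrices for fixed plumbing and curves the sum $a_i + b_i = (V_{GL})_{i,i+1}$ is fixed by $GL$, while $a_i - b_i = \pm(x_i \cdot x_{i+1}) = \pm 1$; hence the two admissible values of $(a_i, b_i)$ are swaps of one another. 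The algebraic crux is then that the determinant of a tridiagonal matrix is a continuant depending only on its diagonal entries $d_i(1-t)$ and on the products $(a_i - tb_i)(b_i - ta_i) = (1+t^2)a_i b_i - t(a_i^2 + b_i^2)$ of paired off-diagonal entries. This product is symmetric in $a_i$ and $b_i$, so normal choices leave $\Delta_S$ unchanged; indeed $a_i b_i = \frac{(a_i+b_i)^2 - 1}{4}$ is determined outright by the Gordon--Litherland data, so for fixed plumbing the polynomial depends on $S$ only through $GL$.

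The remaining, and I expect hardest, step is independence from the choice of plumbed surface representing the isotopy class of $S$. Here I would invoke the Hatcher--Thurston classification in \cite{HT} to enumerate the plumbed surfaces isotopic to $S$ and the moves relating their curve systems, expecting these to be generated by a short list: reversal of the plumbing order $[n_1,\dots,n_k] \mapsto [n_k,\dots,n_1]$, which conjugates $V_S$ by the reversal permutation $J$ and hence preserves $\det(V_S - tV_S^T)$ and the signature, together with the sign and normal changes already handled. Each such move should act on $V_S$ by conjugation by a signed permutation matrix or by an off-diagonal swap, both of which preserve $\Delta_S$ up to the unit $\pm t^m$ and preserve $\sigma_S$. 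The main obstacle is verifying that the simplicity of these surfaces --- visible in the explicit combinatorial description of Figure \ref{HTsurface} --- really does confine the ambiguity to this controlled list; this is exactly where the restriction to 2-bridge knots is essential, since for general knots one cannot rule out curve systems with more complicated intersection patterns.
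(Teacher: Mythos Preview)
Your treatment of the state signature is correct and in fact cleaner than the paper's: since $V_S+V_S^T$ always represents the intrinsic Gordon--Litherland form on $H_1(S)$, Sylvester's law disposes of every choice at once. Your continuant argument for the polynomial's independence from the normal vectors is also a nice alternative to the paper's explicit row/column manipulation; observing that $\det(V_S-tV_S^T)$ depends only on the $d_i$ and on the symmetric functions of each pair $(a_i,b_i)$ is more conceptual than multiplying rows by $-1/t$ and columns by $-t$.

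The genuine gap is in your final step. You speculate that the ambiguity among plumbed representatives of $S$ is generated by reversal of the plumbing order $[n_1,\dots,n_k]\mapsto[n_k,\dots,n_1]$, but this is not the relevant move. By Theorem~1(d) of \cite{HT} the continued fraction expansion with all $|n_i|\geq 2$ giving rise to $S$ is \emph{unique}, so there is no reversal to worry about. The actual ambiguity, identified in Theorem~1(e) of \cite{HT}, is that when some $n_i=\pm 2$ the choice of inner versus outer horizontal disk at certain levels is not determined by the isotopy class of $S$: two genuinely different plumbed surfaces can be isotopic. The paper handles this by invoking Lemma~\ref{discwithbands} to see that swapping an inner disk for an outer disk at level $i$ reverses the crossing of bands $i$ and $i+1$ in the disk-with-bands picture, and then observes that this has exactly the same effect on $V_S$ as reversing $N_{i,i+1}$ together with the orientations of $x_{i+1},\dots,x_k$ --- moves you have already shown to be harmless. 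Your continuant framework would absorb this too, but only once you know that the inner/outer swap preserves the diagonal entries and merely transposes an off-diagonal pair; that geometric fact is what you are missing, and your guess about order reversal does not supply it.
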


The theorem, and indeed the construction of the polynomials, is somewhat simpler if we view our surfaces as given by a single disk with bands attached rather than as plumbings of twisted bands. Therefore before proving the theorem we first show the following:
\begin{lemma}\label{discwithbands}
Let $K$ be a 2-bridge knot, and let $S$ be an essential spanning surface of $K$ corresponding to the continued fraction expansion $[n_1,n_2,\ldots,n_k]$ which is a plumbing of twisted bands as described above, with corresponding curve system $x_1,x_2,\ldots, x_k$. Then $S$ together with the curve system $x_1,x_2,\ldots x_k$ is isotopic to a single disc with $n$ unknotted bands with curve system given by the cores of the bands, where the $i^{th}$ band has $n_i$ half-twists if $i$ is odd and $-n_i$ half-twists if $i$ is even. The bands are arranged so that the feet of the $i^{th}$ and $(i+1)^{st}$ bands are alternating along the disk, 
and so that the feet of the $i^{th}$ and $j^{th}$ bands do not alternate along the disk if $i$ and $j$ are not adjacent integers. If all horizontal disks are inner disks, then the $2i^{th}$ band crosses in front of both the $(2i-1)^{st}$ band and the $(2i+1)^{st}$ band. For each level $j$ at which $S$ uses the outer rather than the inner disk, the crossing of bands $j$ and $j+1$ is the reverse of that for the surface with all inner disks. 
\end{lemma}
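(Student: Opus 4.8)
The plan is to produce the desired disk-with-bands surface by an explicit isotopy that flattens the vertical plumbing of Figure \ref{HTsurface} onto a single horizontal disk, carrying the curve system $x_1,\ldots,x_k$ along and bookkeeping three pieces of data: the number and sign of half-twists on each resulting band, the cyclic order in which the band feet meet the boundary of the base disk, and the over/under information at each crossing of two bands. First I would merge the horizontal level disks: starting from the top (level $0$) disk, I would isotope each successive level disk (inner or outer) down onto a common horizontal plane, dragging the $i$th twisted band with it, so that after the isotopy every twisted band is attached to one base disk with both feet on its boundary. Since the curve $x_i$ is by construction the core of the $i$th box, under this isotopy $x_i$ becomes precisely the core of the $i$th band, and the curve system is exactly the system of band cores claimed. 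Each resulting band is unknotted, as a twisted band is an unknotted band carrying half-twists; all knotting and linking is thereby transferred to the crossing data between distinct bands.

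Next I would determine the twisting of each band. The $i$th box contributes its twisted band together with the parallel untwisted band, and collapsing the box to a single band attached along the base disk leaves exactly $n_i$ half-twists up to sign. To pin down the sign I would track orientations through the flattening: consecutive boxes in the plumbing sit with their twisting axes rotated by $\pi$ relative to one another, reflecting the alternation between the two plumbing directions inherent in the continued-fraction construction. Bringing all boxes into a common planar position therefore requires a half-turn of every other box, and such a half-turn reverses the apparent handedness of its half-twists. This yields $n_i$ half-twists for $i$ odd and $-n_i$ half-twists for $i$ even.

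I would then read off the feet arrangement and the crossings directly from the flattened picture. Because box $i$ and box $i+1$ share the level-$i$ disk, their feet necessarily interleave along the boundary of the base disk after flattening, giving the claimed alternation; boxes $i$ and $j$ with $|i-j|\geq 2$ share no level disk, so their feet can be kept in disjoint arcs and do not alternate. For the crossing data, note that adjacent flattened bands must cross, and which band passes in front is governed entirely by the local configuration at the shared level disk. With every level using the inner disk I would verify in the model that the even-indexed band passes in front of both of its neighbors, establishing the statement for the $2i$th band; replacing the inner disk at level $j$ by its complement in the compactifying sphere routes the connecting region through the point at infinity, which interchanges the over- and under-strand at the crossing of bands $j$ and $j+1$ while leaving every other crossing untouched, exactly as stated.

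The main obstacle is the careful orientation and crossing bookkeeping in the middle two steps rather than any conceptual difficulty: the flattening isotopy itself is essentially forced, but justifying the precise sign $(-1)^{i+1}$ on the twists and the precise effect of the inner/outer choice on the over/under data requires a consistent set of conventions together with a vigilant check that the half-turns used to planarize alternate boxes do not inadvertently disturb crossings already fixed. I expect to resolve this by fixing conventions once at the outset (an orientation of the base disk, the sign of a positive half-twist, and a reference normal to each shared level disk) and verifying the twist-sign and crossing claims on the two-band and three-band models, from which the general statement follows by the locality of each move.
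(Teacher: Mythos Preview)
Your proposal is correct and follows essentially the same approach as the paper: both arguments identify the union of the untwisted pieces (the vertical untwisted rectangles together with the horizontal level disks) as the single base disk, and then perform a flattening isotopy into a common plane, after which the twisted bands become the attached bands and the $x_i$ become their cores. The paper's proof is a two-sentence sketch that leans entirely on Figure~\ref{HTsurface} for the twist-sign, foot-order, and over/under bookkeeping you spell out; your version simply makes that bookkeeping explicit.
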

This realization of $S$ is shown in Figure \ref{flat73surface}  for the knot $7_3$ where $S$ is the surface corresponding to the continued fraction expansion $[4, -2,2,-2]$ all of whose horizontal disks are inner disks.

\begin{figure}
\begin{center}
\leavevmode\hbox{}
\includegraphics[scale=0.33]{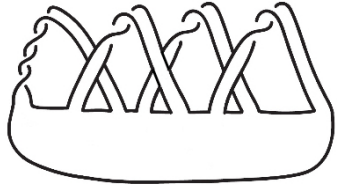}
\caption{The branched surface corresponding to $[4,-2,2,-2]$, with all inner disks, realized as a single disk with bands} 
\label{flat73surface}
\end{center}
\end{figure}

\begin{proof}
View the surface shown in Figure \ref{HTsurface} as a disk with bands attached, where the disk is the union of the shaded vertical (untwisted) rectangles together with the horizontal (inner or outer) disks. Flatten this surface into the plane of the even-numbered vertical rectangles.
\end{proof}

\begin{proof}
We now prove Theorem \ref{invariance}. 
We made a number of choices in defining $V_S$, and we must show that both 
$\Delta_S(t)$ and $\sigma_S$ are 
is independent of these choices. Specifically, we have we have chosen the curve system $x_1,x_2,\ldots, x_k$ to be the cores of the handles of $S$ after first isotoping $S$ to one of the standard surfaces described above; we must investigate the choice involved in this isotopy.
In addition, we have numbered and oriented the curves $x_i$ arbitrarily, and we have chosen normal vectors for the discs $D_{ij}$ arbitrarily.  

We first show independence of the numbering and orientation of the curves $x_i$. Renumbering the curves will alter $V_S$ by reordering the rows and corresponding columns of $V_S$ and therefore also the rows and corresponding columns of $V^T$ and $V_S-tV^T_S$.  This will leave $\sigma_S$ and $\Delta_S(t)$ unchanged. Reversing the orientation of a curve $x_i$ will alter $V_S$ by multiplying the $i^{th}$ row and $i^{th}$ column by $-1$ to obtain a new state matrix $V'_S$. Then $V_S$ and $V'_S$ differ by congruence via a matrix $U$, where $U$ is a diagonal matrix with $i^{th}$ diagonal entry $-1$ and all other diagonal entries $1$: $V_S' = UV_SU^T$. Therefore  $det(V'_S - t V'^{T}_S) = det(V_S - t V^T_S)$, so the state polynomial is unaffected by these choices. 
Further since $V_S$ and $V'_S$ are conjugate the signs of the eigenvalues of $V_S+V_S^T$ and $V'_S+V'^{T}_S$ agree, so $\sigma_S$ is also independent of these choices.


Next we consider the dependence of $\Delta_S(t)$ and $\sigma_S$ on the choices of normal vectors $N_{ij}$.
Note that there is a choice of numbering and orientation for the curves $x_i$ and a choice of normal vectors $N_{ij}$ so that the resulting matrix $V = V_S$ is given by 
\begin{eqnarray}\label{standardV}
V=\left[ \begin{array}{cccccl}
\frac{n_1}{2} & 0 &  &   &  &  \\
1 & -\frac{n_2}{2} & 0  &  &  & \text{\huge0} \\
0 & 1 & \frac{n_3}{2} & 0 &  & \\
 & 0 & 1 & -\frac{n_4}{2} & \ddots & \\
 &  & \ddots & \ddots & \ddots & \ \ \ \ 0\\
 & \text{\huge0}   &  & 0 & 1 & (-1)^{k+1}\frac{n_k}{2}\\
\end{array}\right].
\end{eqnarray}
If we change the orientation of a single normal vector $N_{i,i+1}$ we obtain the matrix $V'$ which is identical to $V$ except at entry $v'_{i,i+1}$, which is 1 rather than 0 and entry $v'_{i+1,i}$, which is  0 rather than 1.

Now $V+V^T$ and $V'+V'^T$ are identical, so $\sigma_S$ is unchanged by the change in $N_{i,i+1}$. To see that $\Delta_S(t)$ is unchanged by the reversal of $N_{i,i+1}$, note that 
\begin{eqnarray}\label{V-tVT}
V-tV^T=\left[ \begin{array}{cccccccc}
m_1 & -t & 0 &  &   &\text{\huge0} \\
1 & -m_2 & -t  & 0 &  & \\
0 & 1 & m_3 & -t & \ddots & \\
 & 0 & 1 & -m_4 & \ddots & 0\\
 \text{\huge0}&  &\ddots & \ddots & \ddots & -t\\
&   &  & 0 & 1 & (-1)^{k+1}m_k\\
\end{array}\right],
\end{eqnarray}
where $m_j = \frac{n_j}{2} - t \frac{n_j}{2}$, while 
$$V' -t V'^T=\left[ \begin{array}{ccccccccc}

m_1 & -t & &  &  &  &  & &\\

1 & -m_2 & -t  & &  & \text{\huge0}& & & \\

 & 1 & m_3 & \ddots &  &  &  & &\\

 & & \ddots & \ddots & -t &  &  & & \\

 &  &  & 1 & (-1)^{i+1}m_{i} & 1 &  & & \\

 &  &  &  & -t & (-1)^{i+2}m_{i+1} & -t & & \\

 &  &\text{\huge0}  &  &  & 1 & \ddots & \ddots &\\
 
& & & & & & \ddots & \ddots & -t\\

 &  &  &  &  &  & & 1 & (-1)^{k+1}m_k\\

\end{array}\right].$$
We see that $V-tV^T$ is obtained from $V'-tV'^T$ by multiplying each of rows $i+1,i+2,\ldots,k$ of $V'-tV'^T$ by $-1/t$ and each of columns $i+1,i+2,\ldots,k$ of $V'-tV'^T$ by $-t$. Then $det(V - t V^{T}) = det(V'-tV'^T)$, and hence $\Delta_S(t)$ is unchanged by reversing $N_{i,i+1}$.

It remains to be shown that the choice of representation of $S$ as a plumbing of twisted bands given by a continued fraction expansion does not affect the resulting polynomials or signatures. By Theorem 1(b) of \cite{HT}, $S$ must be isotopic to such a surface, and by Theorem 1(d) of \cite{HT}, the continued fraction expansion giving rise to $S$ is unique (as we have required $|n_i| \geq 2$ for each $i$). 
If $S$ is isotopic to a given surface via two different isotopies, giving rise to distinct curve systems $x_1,x_2,\ldots,x_k$ and $x'_1,x'_2,\ldots,x'_k$ on $S$ each corresponding to the curve system $x_1,x_2,\ldots, x_k$ on the plumbed surface, then the various state matrices defined by these curve systems with will agree up to choices of normal vectors since the linking numbers $lk(x_i,x_j^{i*})$ for and $lk(x'_i,x_j^{'i*})$ must each agree with the linking numbers $lk(x_i,x_j^{i*})$ for the curves on the plumbed surface up to choice of normal vector and similarly $lk(x_i,\tau(x_i))$ and $lk(x'_i,\tau(x'_i))$ on $S$ must both agree with $lk(x_i,\tau(x_i))$ on the plumbed surface.

However by Theorem 1(e) of \cite{HT} there is any isotopy relation among the various essential spanning surfaces corresponding to a given continued fraction expansion. Specifically, if $n_i = \pm 2$ for one or more $n_i$, then the choice of inner versus outer disk at one or more levels in the representation of $S$ is not uniquely determined by $S$. 

We show that this ambiguity in the representation of $S$ does not impact the resulting state polynomial or state signature. In fact, any two essential spanning surfaces for the knot corresponding to the same continued fraction expansion give the same state polynomial and state signature. To see this, recall from the proof of Lemma 2.5 that replacing the $i^{th}$ inner horizontal disk with the $i^{th}$ outer horizontal disk has the effect of reversing the crossing between bands $i$ and $i+1$ in the representation of $S$ as a disk with bands attached given by Lemma 2.5. But  reversing the crossing between bands $i$ and $i+1$ has the same effect on the associated state matrix as replacing $N_{i,i+1}$ with its negative and reversing the orientation of $x_{i+1}, x_{i+2},\ldots,x_k$. Therefore by the earlier parts of this proof the state polynomial and the state signature are not affected by a reversal of the crossing between bands. Hence the polynomial and the signature depend only on the isotopy class of $S$.

\end{proof}
 
\section{Properties of the invariants}
We continue to let $K$ be a 2-bridge knot with essential spanning surface $S$. Suppose the continued fraction expansion corresponding to $S$ is $[n_1,n_2,\ldots,n_k]$.
 
 \subsection{Properties of state polynomials and relation to genera}
 
\begin{theorem}\label{polyprops}
The state polynomial $\Delta_{S}$ has the following properties:
\begin{enumerate}[i]
	\item The polynomial is symmetric; that is, $\Delta_{S}(t^{-1}) \sim \Delta_{S}(t)$.  \\
	\item $\Delta_{S}(1) = \left\{  \begin{array}{ll}
							1 & \mbox{if $k$ is even} \\
							0 & \mbox{if $k$ is odd.} \end{array} \right. $ \\
	\item $|\Delta_{S}(-1)| = det(K)$.\\
	\item When normalized to be a polynomial with lowest term a constant term, $\Delta_{S}$ is a polynomial of degree $k$ with leading coefficient $\frac{1}{2^k} n_1 n_2 \ldots n_k$.
	\end{enumerate}
\end{theorem}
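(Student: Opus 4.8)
The plan is to exploit Theorem~\ref{invariance}: since $\Delta_S$ is independent of the choices made in building $V_S$, I may compute it from the explicit state matrix $V$ of \eqref{standardV}, for which $V-tV^T$ is the tridiagonal matrix \eqref{V-tVT} with diagonal entries $(-1)^{j+1}m_j$, superdiagonal entries $-t$, subdiagonal entries $1$, and $m_j=\tfrac{n_j}{2}(1-t)$. All four properties then reduce to determinant computations for this single tridiagonal matrix.

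For (i) I would use a transpose identity valid for any $k\times k$ matrix: multiplying $\det(V-t^{-1}V^T)$ by $t^k$ gives $\det(tV-V^T)$, and since determinant is transpose-invariant, $\det(tV-V^T)=\det(tV^T-V)=(-1)^k\det(V-tV^T)$. Hence $\Delta_S(t^{-1})=\pm t^{-k}\Delta_S(t)$, so $\Delta_S(t^{-1})\sim\Delta_S(t)$. For (ii) I set $t=1$, so each $m_j=0$ and $V-V^T$ is the skew-symmetric tridiagonal matrix with zero diagonal, superdiagonal entries $-1$ and subdiagonal entries $1$. The tridiagonal cofactor recurrence collapses to $D_k=D_{k-2}$ with $D_0=1$ and $D_1=0$, giving $\Delta_S(1)=1$ for $k$ even and $0$ for $k$ odd (equivalently, a skew-symmetric matrix of odd order is singular).

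For (iv) I would read off the extreme coefficients of the polynomial $\det(V-tV^T)$ directly from \eqref{V-tVT}. In the expansion of a tridiagonal determinant every surviving term is a product of diagonal entries together with disjoint adjacent off-diagonal pairs; a diagonal entry contributes a factor $(-1)^{j+1}\tfrac{n_j}{2}(1-t)$ of $t$-degree one, whereas replacing two adjacent diagonal factors by the off-diagonal pair $(-t)(1)$ strictly lowers the $t$-degree. Thus the unique term of top degree is the product of all diagonals, so $\deg\det(V-tV^T)=k$ with leading coefficient $(-1)^k\prod_j(-1)^{j+1}\tfrac{n_j}{2}=\pm\tfrac{1}{2^k}n_1\cdots n_k$. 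The same product shows the constant term equals $\prod_j(-1)^{j+1}\tfrac{n_j}{2}\neq0$ (using $|n_i|\ge2$), so the chosen representative is already an honest degree-$k$ polynomial with nonzero constant term, and up to the unit $\pm t^j$ the normalized leading coefficient is $\tfrac{1}{2^k}n_1\cdots n_k$.

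The main obstacle is (iii). Here $t=-1$ gives $V-(-1)V^T=V+V^T=V_{GL}$, so $\Delta_S(-1)=\det V_{GL}$, and I must identify $|\det V_{GL}|$ with $\det(K)=\alpha$. The cleanest self-contained route is to note that at $t=-1$ the matrix \eqref{V-tVT} becomes the symmetric tridiagonal matrix with diagonal $(-1)^{j+1}n_j$ and all off-diagonals $1$; its determinant $P_k$ satisfies the continuant recurrence $P_k=(-1)^{k+1}n_kP_{k-1}-P_{k-2}$, which I would match to the convergent recurrence of the continued fraction $[n_1,\dots,n_k]$ presenting $\beta/\alpha$ in order to conclude $|P_k|=\alpha=\det(K)$. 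Alternatively one can invoke the Gordon--Litherland identification of the form $GL$ with the Goeritz form of $K$, whose matrix presents $H_1$ of the double branched cover and hence has determinant of absolute value $\det(K)$. Keeping track of the alternating signs $(-1)^{j+1}$ while aligning the two recurrences is the delicate bookkeeping step.
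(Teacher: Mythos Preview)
Your proposal is correct and follows essentially the same path as the paper. Parts (i) and (ii) match the paper almost verbatim; for (iii) the paper takes exactly your second alternative, simply citing that $V+V^T=V_{GL}$ is a Goeritz matrix via \cite{GL}, so your continuant-recurrence route is an extra self-contained option not in the paper; for (iv) the paper is slightly slicker, observing that $\Delta_S(0)=\det V$ with $V$ lower triangular, hence $\det V=\pm\tfrac{1}{2^k}n_1\cdots n_k$, and then invoking the symmetry from (i) to get the leading coefficient, whereas you extract both extreme coefficients directly from the tridiagonal expansion.
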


\begin{proof}
Isotope $S$ to be the disk with bands given by Lemma \ref{discwithbands}. Choose the curves $x_i$ and orient the normal vectors $N_{ij}$ so that the resulting matrix $V$ is given by matrix 
(\ref{standardV}) above.

To prove property $i$, note 
$$\begin{array}{lcl}
\Delta_{S}(t) &=& det(V-tV^T)\\
&=& det[(V-tV^T)^T]\\
&=& det(-tV+V^T)\\
&=& det[-t(V-t^{-1}V^T)]\\
&=& \pm t^k det(V-t^{-1}V^T)\\
&=& \pm t^k \Delta_S(t^{-1}).\\
\end{array}$$
Thus the two polynomials are equivalent.

To prove property $ii$, observe that
$$\Delta_S(1) = det(V-V^T) = \left[ \begin{array}{cccccccc}
0 & -1 & 0 &  &   &\text{\huge0} \\
1 & 0 & -1  & 0 &  & \\
0 & 1 & 0 & -1 & \ddots & \\
 & 0 & 1 & 0 & \ddots & 0\\
 \text{\huge0}&  &\ddots & \ddots & \ddots & -1\\
&   &  & 0 & 1 & 0\\
\end{array}\right].$$
If $k=1$ we have $\Delta_S(1) = det[0] = 0$, and if $k=2$ we have $\Delta_S(1) = det \left[ \begin{array}{cc} 0&-1\\1&0\end{array}\right] =1.$ Further, if $k>1$, it is easy to check by cofactor expansion along the first row and then by cofactor expansion along the first column of the $k-1 \times k-1$ submatrix that $det(V-V^T) = det M$, where $M$ is the $k-2 \times k-2$  matrix of the same form as $V-V^T$. The claim follows by induction.

For property $iii$, note that $\Delta_S(-1) = det(V+V^T)$,
and recall that $V+V^T = V_{GL}$. The result is immediate since $V_{GL}$ is a Goeritz matrix by Section 2 of \cite{GL}.















Finally we turn to property $iv$. Note that if $\Delta_S$ is normalized as indicated, then the constant term is given by $\Delta_S(0) = det(V)$ for any state matrix $V$. We have seen that we may take $V$ to be lower triangular, so $det(V) = \pm \frac{1}{2^k} n_1n_2\cdots n_k$. By symmetry this is also the coefficient of $t^k$.

\end{proof}

Note that if $S$ is orientable, then $\Delta_{S}$ is the Alexander polynomial of $K$. In this case the properties listed are well-known. Note in particular that if $S$ is orientable then $k$ is even.

We define the \textit{genus} of a non-orientable surface $\Sigma$, like that of an orientable surface, to be $\frac{1 - \chi(\Sigma)}{2}$, or equivalently to be half the number of cross-cap summands in the surface.  Note that the genus of $S$ is $g(S) = k/2$, so that in general the degree of the state polynomial $\Delta_S(t)$ is $2g(S)$. We examine the relationship between the various genera and hence the various degrees of the state polynomials for a knot $K$.

Recall that the \textit{genus of K} is the minimum genus among all Seifert surfaces of $K$. We denote this by $g(K)$. In our case this is the genus of the orientable essential surface $S$, corresponding to the unique continued fraction expansion all of whose quotients $n_1,n_2,\ldots,n_k$ are even. We show that $g(K)$ and the genera $g(S)$ for nonorientable essential spanning surfaces $S$ are essentially independent, thereby establishing that the degrees of the Alexander polynomial of $K$ and the other state polynomials of $K$ are independent. 

We remark that the \textit{nonorientable genus of K} is defined to be the minimum genus among all nonorientable spanning surfaces of $K$.  In Section 3 of \cite{AK} the authors show that the nonorientable genus of $K$ is equal to the minimum genus among the essential spanning surfaces for $K$ if this genus is realized by a nonorientable surface or is equal to $g(K)+1/2$ otherwise. This is also shown explicitly, and the nonorientable genera are computed explicitly, for 2-bridge knots in \cite{HiTe}. 
Note that if the nonorientable genus is realized by an essential spanning surface of $K$ then the nonorientable genus is equal to half the degree of the corresponding state polynomial. On the other hand, if no nonorientable essential spanning surface realizes the nonorientable genus of knot, then the nonorientable genus is realized by a surface obtained by adding a cross-cap to the minimal genus Seifert surface. In this case the nonorientable genus is not equal to half the degree of any state polynomial of $K$.

We prove the following

\begin{proposition} The degrees of the state polynomials of a 2-bridge knot $K$ are independent of the degree of the Alexander polynomial of $K$. Specifically:
\begin{enumerate}[i]
	\item There exist 2-bridge knots with linear state polynomials and  Alexander polynomials of arbitrarily high degree. \\
	\item There exist 2-bridge knots with quadratic Alexander polynomials and with state polynomials of arbitrarily high degree.
\end{enumerate}
\end{proposition}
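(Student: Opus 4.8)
The plan is to exploit the explicit formulas established in Theorem~\ref{polyprops}, which completely characterize the degree and leading coefficient of a state polynomial in terms of the continued fraction expansion $[n_1,n_2,\ldots,n_k]$. By property~$iv$, the normalized state polynomial $\Delta_S$ has degree exactly $k$, so controlling the degree of a state polynomial amounts to controlling the length $k$ of the continued fraction expansion of the corresponding essential surface. Meanwhile, the Alexander polynomial is the state polynomial of the \emph{orientable} essential surface, which by the discussion preceding the proposition corresponds to the unique continued fraction expansion all of whose quotients are even. The strategy, then, is purely constructive: for each part I would exhibit an explicit family of 2-bridge knots $K(\alpha,\beta)$ whose even continued fraction expansion and whose short (or long) odd-or-mixed expansions realize the claimed degrees.

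For part~$i$, I would seek knots admitting a nonorientable essential surface with continued fraction expansion of length $k=1$ (so $\Delta_S$ is linear), while the even expansion giving the Seifert surface has arbitrarily large length. Concretely, a single half-twisted band with $n_1$ half-twists (an odd $n_1$ with $|n_1|\geq 2$, say $n_1 = 3, 5, 7, \ldots$) corresponds to a length-one expansion $[n_1]$ and hence to a fraction $\beta/\alpha$ that is a single continued fraction term; the resulting knot is a $(2,n_1)$-torus knot, whose Seifert genus grows with $n_1$. I would compute both expansions: the short odd one of length $1$ giving a linear state polynomial, and the even expansion of the \emph{same} fraction, whose length grows, giving an Alexander polynomial of degree equal to that length. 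The key computation is to verify that the even continued fraction expansion of $\beta/\alpha = n_1$ (for odd $n_1$) indeed has length tending to infinity as $n_1 \to \infty$; this is the standard conversion between odd and even continued fraction expansions and should be routine.

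For part~$ii$, I would reverse the roles: fix the even expansion to have length $k=2$ (forcing a quadratic Alexander polynomial by property~$iv$) while arranging a nonorientable essential surface with expansion of arbitrarily large length $k'$. Here I would start from a fraction whose even expansion is $[2a, 2b]$ for suitable integers $a,b$, giving $\beta/\alpha$ a determinate value, and then compute a second continued fraction expansion of the same rational number using terms of size $\geq 2$ that is as long as desired. Since a single rational number has multiple continued fraction representations of differing lengths (subject to the $|n_i|\geq 2$ constraint that Hatcher--Thurston impose), the main task is to produce, for each target length, a fraction admitting simultaneously a length-two even expansion and a long admissible expansion. I would likely parametrize an infinite family $\alpha_m/\beta_m$ directly by its long expansion $[n_1,\ldots,n_{k'}]$ and then check that its even expansion collapses to length two.

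The main obstacle I anticipate is \emph{not} the polynomial-degree bookkeeping, which Theorem~\ref{polyprops} hands us for free, but rather the number-theoretic control over continued fraction lengths: producing explicit rationals whose admissible expansions (with all $|n_i|\geq 2$) realize two prescribed lengths simultaneously, one even and one arbitrary. The subtlety is that the Hatcher--Thurston uniqueness statement says the admissible expansion is unique, so the ``two expansions'' of a single fraction must come from genuinely different sources: the even expansion producing the Seifert surface need not satisfy $|n_i|\geq 2$, so it is a \emph{different} kind of expansion than the admissible ones indexing the nonorientable surfaces. I would need to be careful to keep straight which expansions are admissible (indexing distinct essential surfaces, hence distinct state polynomials) and which is the even one (giving the Alexander polynomial), and to confirm that the families I construct genuinely give 2-bridge knots with the asserted independence of the two degrees. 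Once the right families are written down, the verifications reduce to direct continued fraction arithmetic.
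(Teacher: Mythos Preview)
Your constructive plan is exactly the paper's: exhibit explicit families via their continued fraction expansions. For (i) the paper uses the $(2,m)$ torus knot with expansion $[m]$ (linear state polynomial) and even expansion $[-2,2,-2,\ldots,-2,2]$ of length $m-1$; for (ii) it uses the knot with even expansion $[2i,2j]$ (quadratic Alexander polynomial) whose other admissible expansions are $[-2,2,\ldots,-2,2j+1]$ and $[2i+1,-2,2,\ldots,-2]$, of lengths $2i$ and $2j$.

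The ``main obstacle'' you flag, however, rests on a misreading of Hatcher--Thurston. The uniqueness in their Theorem~1(d) is per \emph{surface}, not per knot: a single rational $\beta/\alpha$ typically admits several continued fraction expansions with all $|n_i|\geq 2$, and these index the \emph{distinct} essential spanning surfaces of $K(\alpha,\beta)$. The all-even expansion is one of these admissible expansions (every nonzero even integer automatically has absolute value $\geq 2$), and it is precisely the one corresponding to the orientable essential surface. So there is no ``different kind of expansion'' in play, and the number-theoretic control you worry about is simpler than you fear: you only need a rational with both a short and a long admissible expansion, and the families above deliver this by inspection once you know the standard identity converting $[\ldots,n]$ with $n$ odd into an expansion ending $[\ldots,n\pm 1,\mp 2,\pm 2,\ldots]$.
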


\begin{proof}
To prove the theorem, we identify knots with continued fraction expansions of the appropriate lengths. See for example the algorithm of Proposition 3.3 of \cite{CFLM} for identifying the continued fraction expansions for a knot $K$.

For claim $i$, consider the $(2,m)$ torus knot. This has a continued fraction expansion $[m]$, which corresponds to a surface $S$ with genus 1/2 and linear state polynomial $\Delta_S$. The Seifert surface for the $(2,m)$ torus knot corresponds to the continued fraction expansion $[-2,2,-2,2,\ldots,-2,2]$ of length $m-1$. Then the degree of the Alexander polynomial is $2g(K) = m-1$.

For claim $ii$, consider the 2-bridge knot with continued fraction expansion $[2i,2j]$. This continued fraction expansion yields the Seifert surface of the knot, so the knot is genus one with a quadratic Alexander polynomial. The other two essential spanning surfaces of the knot have continued fraction expansions $[-2,2,-2,\ldots,-2,2j + 1]$ and $[2i + 1,-2,2,-2,\ldots,-2]$ of length $2i$ and $2j$, respectively. These surfaces are of genera $i$ and $j$, respectively, and the corresponding state polynomials have degrees $2i$ and $2j.$

\end{proof}





\subsection{Properties of state signatures and relation to boundary slopes}
We next consider properties of the state signatures of the 2-bridge knot $K$. We continue to let $S$ be an essential spanning surface for $K$ corresponding to the continued fraction expansion $[n_1,n_2,\ldots,n_k]$. We begin by explaining how to compute the state signature $\sigma_S$ in terms of the continued fraction expansion. Specifically, let $N^+$ denote the number of entries in the list $n_1,n_2,\ldots n_k$ whose signs agree with the alternating pattern  $+,-,+,-,\ldots$, and let $N^-$ denote the number of entries in the list $n_1,n_2,\ldots n_k$ whose signs do not agree with the alternating pattern $+,-,+,-,\ldots$. 

\begin{proposition}
The state signature $\sigma_S$ is given by $\sigma_S = N^+ - N^-.$
\end{proposition}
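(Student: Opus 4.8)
The plan is to reduce everything to a computation with the concrete matrix $V$ of (\ref{standardV}). Choosing the curves and normal vectors so that $V_S = V$ is the lower-bidiagonal matrix displayed there, the form $V_S + V_S^T = V + V^T$ becomes the symmetric tridiagonal matrix whose $i$-th diagonal entry is $d_i := (-1)^{i+1} n_i$ and all of whose sub- and super-diagonal entries equal $1$. Since the alternating pattern $+,-,+,-,\ldots$ carries sign $(-1)^{i+1}$ in position $i$, the entry $n_i$ agrees with the pattern exactly when $(-1)^{i+1} n_i > 0$, i.e.\ when $d_i > 0$. Thus $N^+$ counts the positive $d_i$ and $N^-$ counts the negative $d_i$, and the proposition becomes the purely linear-algebraic statement $\sigma(V + V^T) = \sum_{i=1}^k \operatorname{sign}(d_i)$.

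To compute this signature I would diagonalize $V + V^T$ by a congruence, namely symmetric Gaussian elimination (the $LDL^T$ reduction), and then invoke Sylvester's law of inertia. Because the matrix is tridiagonal, using the $(1,1)$ pivot to clear the $(2,1)$ and $(1,2)$ entries changes only the $(2,2)$ entry and leaves the matrix tridiagonal; iterating, the successive pivots $p_1,\ldots,p_k$ obey $p_1 = d_1$ and $p_i = d_i - 1/p_{i-1}$ for $i \geq 2$. After the elimination the form is congruent to $\operatorname{diag}(p_1,\ldots,p_k)$, so by Sylvester's law $\sigma(V + V^T) = \sum_i \operatorname{sign}(p_i)$, and it suffices to show $\operatorname{sign}(p_i) = \operatorname{sign}(d_i)$ for every $i$.

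This sign stability is the crux of the argument, and it is exactly where the essentiality hypothesis $|n_i| \geq 2$ enters; I expect it to be the main (indeed the only) obstacle. I would prove by induction the sharper statement that $|p_i| > 1$ and $\operatorname{sign}(p_i) = \operatorname{sign}(d_i)$. The base case $p_1 = d_1$ is immediate since $|d_1| = |n_1| \geq 2$. For the inductive step, $|p_{i-1}| > 1$ gives $|1/p_{i-1}| < 1$, so subtracting $1/p_{i-1}$ from $d_i$ (with $|d_i| = |n_i| \geq 2$) cannot alter its sign and leaves $|p_i| \geq |d_i| - |1/p_{i-1}| > 2 - 1 = 1$. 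This closes the induction; as a byproduct no pivot vanishes, so the diagonalization and Sylvester's law apply without degeneracy. Hence $\sigma_S = \sigma(V + V^T) = \sum_i \operatorname{sign}(d_i) = N^+ - N^-$, as claimed, with no appeal to the surface geometry beyond the standard form (\ref{standardV}).
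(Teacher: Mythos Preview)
Your proof is correct and follows essentially the same route as the paper's: both compute the $LDL^T$ pivots of the tridiagonal matrix $V+V^T$, which unwind to the continued-fraction quantities $(-1)^{i+1}\bigl(n_i+\frac{1}{n_{i-1}+\cdots+\frac{1}{n_1}}\bigr)$, and both use $|n_i|\geq 2$ to see that the correction term never flips the sign of $(-1)^{i+1}n_i$. Your presentation is in fact a bit cleaner than the paper's---you explicitly invoke Sylvester's law of inertia and a congruence, whereas the paper speaks of ``row equivalence'' and ``signs of eigenvalues,'' which taken literally is not quite right (row operations alone do not preserve eigenvalue signs); and your induction $|p_i|>1$ makes precise the paper's one-line estimate $|n_i|>\bigl|\frac{1}{n_{i-1}+\cdots+\frac{1}{n_1}}\bigr|$.
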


\begin{proof}
We are interested in the signature of $V+V^T$.
Note that 
\begin{eqnarray*}
V+V^T = \left[ \begin{array}{ccccccc}

n_1 & 1 & 0 & &  \text{\huge0}& \\

1 & -n_2 & 1 & 0 & &\\

0 & 1 & n_3 & 1& \ddots &\\

& 0 & 1 & \ddots & \ddots &0\\

\text{\huge0}& & \ddots& \ddots & (-1)^kn_{k-1} &1 \\

  & & & 0 & 1 & (-1)^{k+1}n_k \\

\end{array}\right],\end{eqnarray*}
which is row equivalent to
 
\begin{eqnarray*}\left[ \begin{array}{ccccccc}

n_1 & 1 &  & & \text{\huge0}\\

0 & -\big(n_2+\frac{1}{n_1}\big) & 1 &  & \\

 & 0 & \bigg(n_3+\frac{1}{n_2+\frac{1}{n_1}}\bigg) & \ddots& \\

&  & \ddots & \ddots & 1 \\

\text{\huge0}& & & 0 & (-1)^{k+1}\bigg(n_k+\frac{1}{n_{k-1}+\ldots+\frac{1}{n_1}}\bigg)  \\

\end{array}\right].\end{eqnarray*}
Therefore the signs of the eigenvalues of $V+V^T$ agree with the signs of the numbers in the list $n_1, -(n_2+\frac{1}{n_1}), n_3 +\frac{1}{n_2+\frac{1}{n_1}}, \ldots, (-1)^k (n_k+\frac{1}{n_{k-1}+\ldots+\frac{1}{n_1}})$. Since $|n_i| > \left| \frac{1}{n_{i-1}+\ldots+\frac{1}{n_1}}\right|$, we see these signs agree with the signs of the numbers in the list $n_1,- n_2, n_3, \ldots, (-1)^k n_k$. Thus the number of positive eigenvalues is precisely $N^+$, and the number of negative eigenvalues is $N^-$.
\end{proof}

This proposition makes the computation of the state signatures simple, and allows us to easily verify some properties of state signatures.

\begin{proposition}
The state signatures $\sigma_S$ for a given knot $K$ have the following properties:
\begin{enumerate}[i]
	\item $|\sigma_S| \leq 2g(S)$.
	\item The width of the set of state signatures for a 2-bridge knot may be arbitrarily large.
	\item The numbers $\sigma(K)$ and $\sigma_S$ are independent; that is, either may be arbitrarily large while the other is 0.
\end{enumerate}
\end{proposition}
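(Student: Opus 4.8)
The plan is to reduce everything to the formula $\sigma_S = N^+ - N^-$ established in the preceding proposition, together with the relation $2g(S) = k$ (the length of the continued fraction expansion) and the fact that the state signature of the orientable essential surface $S_0$ of $K$ is precisely $\sigma(K)$. Part $i$ is then immediate: since each entry of the list $n_1,\ldots,n_k$ is counted exactly once, $N^+ + N^- = k$, so $|\sigma_S| = |N^+ - N^-| \le N^+ + N^- = k = 2g(S)$.

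For part $ii$ and the first half of part $iii$ I would reuse the family $[2i,2j]$ together with its two nonorientable essential spanning surfaces $[-2,2,\ldots,-2,2j+1]$ (length $2i$) and $[2i+1,-2,\ldots,-2]$ (length $2j$) exhibited in the proof of the degree proposition. Taking $i,j>0$ and reading signs against the alternating pattern $+,-,+,-,\ldots$, the orientable surface $[2i,2j]$ has one match and one mismatch, so $\sigma_S=\sigma(K)=0$; every entry of $[-2,2,\ldots,-2,2j+1]$ mismatches the pattern, giving $\sigma_S=-2i$; and every entry of $[2i+1,-2,\ldots,-2]$ matches it, giving $\sigma_S=2j$. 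Hence the set of state signatures of this knot contains $\{-2i,\,0,\,2j\}$, whose width is at least $2(i+j)$, proving $ii$; and since here $\sigma(K)=0$ while $\sigma_S=2j$ is arbitrarily large, this proves one half of $iii$.

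For the remaining half of $iii$ I would exhibit, for each $m$, a knot of large knot-signature that carries a balanced ($\sigma_S=0$) nonorientable surface. My candidate family is $K_m=K(6m+1,2m)$, whose orientable essential surface has the all-even expansion $[4,-2,2,-2,\ldots,-2]$ of length $2m$ (first entry $4$, the remaining $2m-1$ entries alternating $-2,2,\ldots,-2$). Every entry of this expansion agrees with the alternating pattern, so $\sigma(K_m)=2m$, which is arbitrarily large. The knot $K_m$ also carries the length-two nonorientable surface $[-2m,-3]$; its two entries have the same sign, so exactly one matches the pattern and $\sigma_S=0$. The main obstacle is precisely verifying that $[-2m,-3]$ and $[4,-2,2,\ldots,-2]$ describe \emph{the same} knot, since distinct essential surfaces of one 2-bridge knot arise from distinct continued-fraction representatives of $\beta/\alpha$. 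Here I would compute $[4,-2,2,\ldots,-2]=2m/(6m+1)$ and $[-2m,-3]=-3/(6m+1)$, and then check $(2m)(-3)\equiv 1 \pmod{6m+1}$, so that $-3\equiv (2m)^{-1}$ and by Schubert's classification $K(6m+1,-3)=K(6m+1,2m)$. Confirming that this ``inverse'' representative really yields one of the essential spanning surfaces guaranteed by \cite{HT}, and that the two surfaces are genuinely distinct (their genera are $m$ and $1$), is the delicate point; the signature computations themselves are then routine applications of the $N^+-N^-$ formula.
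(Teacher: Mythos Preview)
Your argument is correct. Part $i$ and the reuse of the $[2i,2j]$ family for part $ii$ and the first half of $iii$ are clean and essentially parallel to the paper (the paper does not separate $ii$ from $iii$; it simply notes that the families exhibited for $iii$ already force arbitrarily large width). Your observation that the three surfaces of $K(4ij+1,2j)$ have state signatures $-2i$, $0$, $2j$ gives a slightly sharper width statement than the paper bothers to record.

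The one place you diverge in a nontrivial way is the second half of $iii$. You pick the same knot family $K(6m+1,2m)$ as the paper, but for the balanced surface you use $[-2m,-3]$, which expands $-3/(6m+1)\equiv (2m)^{-1}/(6m+1)$, and then invoke Schubert to identify $K(6m+1,-3)$ with $K(6m+1,2m)$. This works---the state signature is just the signature of the Gordon--Litherland form, hence an invariant of the surface, and the ambient homeomorphism realizing Schubert's identification carries essential spanning surfaces to essential spanning surfaces---but it is a detour you do not need. The paper instead takes $[3,2m]$, which is already a continued fraction expansion of $2m/(6m+1)$ itself (compute $1/(3+1/(2m))=2m/(6m+1)$), so no appeal to Schubert is required; the pattern check $+,-$ against $3,2m$ immediately gives $N^+=N^-=1$ and $\sigma_S=0$. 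Replacing your $[-2m,-3]$ by $[3,2m]$ removes exactly the ``delicate point'' you flagged and leaves the rest of your proof unchanged.
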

Here note that the width of the set of state signatures is the difference between the largest and the smallest state signature.

\begin{proof}
For the first claim, note that the number of eigenvalues of $V+V^T$ is $2g(S)$, so this is an upper bound for the absolute value of the signature.

For the remaining claims, first consider the knot $K(4m+1,2m)$ with continued fraction expansion $[2,2m]$ giving the Seifert surface and with continued fraction expansion $[3,-2,2,-2,\ldots,-2]$ of length $2m$ giving a second essential spanning surface $S$. We see that for the Seifert surface $N^+ = N^- = 1$, so $\sigma(K) = 0$. However for $S$ we see that $N^+=2m$ and $N^-=0$, so $\sigma_S = 2m$.

In contrast, the knot  $K(6\ell+1,2\ell)$ where $\ell$ is any integer has a continued fraction $[4,-2,2,-2,\ldots,-2]$  of length $2\ell$ yielding the Seifert surface and a continued fraction expansion $[3, 2\ell]$ yielding an essential spanning surface $S$. We see that for the Seifert surface $N^+=2 \ell$ and $N^- = 0$, whereas for $S$ we have $N^+=N^-=1$. Hence $\sigma(K) = 2 \ell$ while $\sigma_S = 0$.

\end{proof}

Finally, we return to our motivating problem, realizing the boundary slopes of $K$ as a difference of signatures of $K$:

\begin{theorem}
Let $S$ be an essential spanning surface of $K$. Then the boundary slope of $S$ is given by
$2(\sigma_S - \sigma(K))$.
\end{theorem}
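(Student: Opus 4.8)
The plan is to combine the Hatcher-Thurston boundary slope formula with the explicit signature computation in the preceding Proposition. By \cite{HT}, the boundary slope of the essential surface $S$ corresponding to $[n_1,\ldots,n_k]$ equals $2(M-M_0)$, where $M$ is the count that Hatcher and Thurston read off from the edgepath of $S$ and $M_0$ is the same count applied to the Seifert surface, that is, the orientable essential surface, whose continued fraction has all even entries. Since the preceding Proposition computes the state signature as $\sigma_S = N^+ - N^-$ purely from the sign pattern of $n_1,\ldots,n_k$, the whole theorem reduces to the single combinatorial identity $M = N^+ - N^-$: granting this, it applies both to $S$, giving $M = \sigma_S$, and to the Seifert surface, giving $M_0 = N_0^+ - N_0^- = \sigma(K)$ (the latter equality because the Gordon-Litherland form of the orientable surface is $V+V^T$ for a genuine Seifert matrix, so its state signature is the ordinary knot signature). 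Substituting into $2(M-M_0)$ then yields the boundary slope $2(\sigma_S - \sigma(K))$.

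To establish the identity $M = N^+ - N^-$, I would recall the precise form of the Hatcher-Thurston count and translate it into the conventions of this paper. Their boundary slope accumulates term by term along the edgepath, each $n_i$ contributing according to its turning direction; in the normalization of Lemma \ref{discwithbands}, with the sign $(-1)^{i+1}$ attached to the $i$-th block of half-twists, this turning direction is governed precisely by whether the sign of $n_i$ agrees with the alternating pattern $+,-,+,-,\ldots$. I would verify that each agreement contributes $+1$ and each disagreement $-1$ to $M$, which is exactly $N^+ - N^-$. The delicate part, and the main obstacle, is aligning Hatcher-Thurston's continued-fraction and orientation conventions with those here so that the alternating pattern matches exactly and the global sign comes out correctly rather than reversed; the bookkeeping of Lemma \ref{discwithbands} is what makes this alignment possible.

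As a conceptual cross-check, I would also note the direct route through Gordon-Litherland, which bypasses the edgepath count entirely. Since $\sigma_S$ is by construction the signature of the Gordon-Litherland form $V_{GL}=V_S+V_S^T$, the correction-term theorem of \cite{GL}, namely $\sigma(K) = \sigma(V_{GL}) - \frac{1}{2}e(S)$ for the normal Euler number $e(S)$, immediately rearranges to $e(S) = 2(\sigma_S - \sigma(K))$. Because $e(S)$ is the boundary slope of $S$, this recovers the formula independently of Hatcher-Thurston. Here too the one point requiring care is matching the sign convention relating the Euler number to the boundary slope, so that the two computations agree and produce the stated sign.
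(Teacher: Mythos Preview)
Your primary approach is essentially the paper's own: cite Hatcher--Thurston's boundary slope formula $2(M-M_0)$, identify $M$ with $N^+-N^-$ (the paper simply quotes Proposition~2 of \cite{HT} restated in its sign conventions rather than re-deriving this), and then invoke the preceding Proposition $\sigma_S = N^+ - N^-$ together with the Seifert-surface case $\sigma(K) = N_0^+ - N_0^-$. Your attention to the sign-convention alignment is exactly the point the paper flags parenthetically.

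Your second route via the Gordon--Litherland correction term is a genuinely different and arguably cleaner argument that the paper does not use. It trades the Hatcher--Thurston edgepath combinatorics for the single identity $\sigma(K) = \sigma(V_{GL}) - \tfrac{1}{2}e(S)$ from \cite{GL}, so once one checks that $e(S)$ equals the boundary slope with the right sign, the result is immediate and requires no continued-fraction bookkeeping at all. The Hatcher--Thurston route has the advantage of staying within the explicit $N^\pm$ language already set up in the paper; your Gordon--Litherland route has the advantage of working for any spanning surface of any knot, not just essential surfaces of two-bridge knots, and of making transparent why the formula should hold.
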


\begin{proof}
By Proposition 2 of \cite{HT} we know the boundary slope of the essential spanning surface $S$ corresponding to the continued fraction expansion $[n_1,n_2,\ldots,n_k]$ is given by $2(N^+ - N^-) - 2(N^+_0 - N^-_0)$, where $N^+_0$ and $N^-_0$ are $N^+$ and $N^-$ for the all-even continued fraction expansion yielding the essential Seifert surface of $K$. (Here note that our sign conventions do not agree with those of \cite{HT}; this is their result restated with our sign conventions.) But $2(N^+ - N^-) = 2\sigma_S$, and $2(N^+_0 - N^-_0) = \sigma(K)$.
\end{proof}

\bigskip
\noindent
{\it Acknowledgements.} The authors thank William Franczak for contributing Figure \ref{HTsurface}. In addition we thank the referee for his or her insightful suggestions.

\end{document}